\newtheorem{theorem}{Theorem}[section]
\theoremstyle{definition}
\title[Impact of delay on HIV-1 dynamics]
      {Impact of delay on HIV-1 dynamics of f{}ighting a virus with
another virus}
\author[Yun Tian, Yu Bai and Pei Yu]{}
\subjclass{Primary: 58F15, 58F17; Secondary: 53C35.}
\keywords{Global stability, HIV-1 model, delay, recombinant virus,
          Hopf bifurcation, Lyapunov function, LaSalle invariance principle.}
 \email{ytian56@uwo.ca}
 \email{youyou1123@live.ca}
 \email{pyu@uwo.ca}
\begin{document}
\maketitle

\centerline{\scshape Yun Tian, Yu Bai, Pei Yu}
\medskip
{\footnotesize
 \centerline{Department of Applied Mathematics,}
   \centerline{ Western University, London, Ontario N6A 5B7, Canada}
} 



\bigskip


\centerline{(Communicated by the associate editor name)}


\begin{abstract}

In this paper, we propose a mathematical model for HIV-1 infection
with intracellular delay. The model examines a viral-therapy for controlling
infections through recombining HIV-1 virus with a genetically modified
virus. For this model, the basic reproduction number $\mathcal{R}_0$
are identified and its threshold properties are discussed.
When $\mathcal{R}_0 < 1$, the infection-free equilibrium $E_0$ is globally
asymptotically stable. When $\mathcal{R}_0 > 1$, $E_0$ becomes
unstable and there occurs the single-infection equilibrium $E_s$,
and $E_0$ and $E_s$ exchange their stability at the transcritical
point $\mathcal{R}_0 =1$.
If $1< \mathcal{R}_0 < R_1$, where $R_1$ is a positive constant explicitly
depending on the model parameters, $E_s$ is globally asymptotically stable,
while when $\mathcal{R}_0 > R_1$, $E_s$ loses its
stability to the double-infection equilibrium $E_d$.
There exist a constant $R_2$ such that $E_d$ is asymptotically
stable if $R_1<\mathcal R_0 < R_2$, and $E_s$ and $E_d$ exchange their
stability at the transcritical point $\mathcal{R}_0 =R_1$.
We use one numerical example
to determine the largest range of $\mathcal R_0$ for the local
stability of $E_d$ and existence of Hopf bifurcation. Some simulations
are performed to support the theoretical results.
These results show that the delay plays an
important role in determining the dynamic behaviour of the system.
In the normal range of values, the delay may change the dynamic behaviour
quantitatively, such as greatly reducing the amplitudes of oscillations,
or even qualitatively changes the dynamical behaviour such as revoking
oscillating solutions to equilibrium solutions.
This suggests that the delay is a very important
fact which should not be missed in HIV-1 modelling.
\end{abstract}


\section{Introduction}

Human immunodeficiency virus (HIV) is a serious mortal lentivirus,
which can cause acquired immunodeficiency syndrome (AIDS).
Reports have known that many people are killed by AIDS every year,
and yet, until today, there is no effective way to cure
the AIDS. Thus, many scientists and researchers have been focusing on
the study of controlling the infections.
One of the approaches developed recently, offered by genetic
engineering, is to use recombinant virus capable
of controlling infections of HIV \cite{Wagner1999,Nolan1997}.
Recently, Revilla and Garcia-Ramos established a 5-dimensional
ordinary differential system to investigate the control of the infections
by introducing a recombinant virus to fight the virus~\cite{Revilla2003}.
Later, this model was studied by Jiang {\it et al.}~\cite{Jiang2009}
in detail to show various bifurcation patters and rich dynamics,
as well as a control study given in~\cite{YZ2012}
by introducing a constant injection rate of the recombinant virus to
this model.

A standard and classic in-host model for HIV infection
can be described by the following differential equations:
\begin{equation}\label{a1}
\begin{array}{lll}
\dot x&\!\!\!=\!\!\!& \lambda-dx-\beta xv,\\[0mm]
\dot y&\!\!\!=\!\!\!& \beta xv-ay,\\[0mm]
\dot v&\!\!\!=\!\!\!& ky-pv,
\end{array}
\end{equation}
where $x(t)$, $y(t)$, $v(t)$ are the density of virus-free host cells,
infected cells, and a pathogen virus, respectively, at time $t$.
The production rate and death rate for the healthy cells are respectively
$\lambda$ and $d$. $\beta$ is the constant rate at which a T-cell
is contacted by the virus. It is also assumed that once cells are infected,
they may die at a rate $a$ due to the action of either the virus or the
immune system, and each produces the pathogens at a rate $k$ during
their life which on average has length $1/a$.

In~\cite{Revilla2003}, a second virus is added into
model (\ref{a1}) which may cause the infected cells to have a second
infection, called double-infection, leading to a modified model as
\begin{equation}\label{b2}
\begin{array}{cll}
\dot x&\!\!\!=\!\!\!& \lambda-dx-\beta xv,\\[0mm]
\dot y&\!\!\!=\!\!\!& \beta xv-ay-\alpha wy,\\[0mm]
\dot z&\!\!\!=\!\!\!& \alpha wy-bz,\\[0mm]
\dot v&\!\!\!=\!\!\!& ky-pv,\\[0mm]
\dot w&\!\!\!=\!\!\!& cz-qw,
\end{array}
\end{equation}
where $w(t)$ and $z(t)$ are the recombinant (genetically modified) virus
and double-infected cells. After the second virus is enrolled,
once the cells which have been infected by the pathogens are infected
again by the recombinant, they can be turned into
double-infected cells at a rate $\alpha\omega y$, where the recombinants are
removed at a rate $qw$. The double infected cells die at a rate $bz$, and
release recombinants at rate $cz$. Having established the model (\ref{b2}),
the authors of~\cite{Revilla2003} analyzed the structure of equilibrium
solutions and presented some simulations. Later, in~\cite{Jiang2009}, the
authors fully analyzed the stability of all three equilibrium solutions
and bifurcations between these equilibria, as well as proved the existence of
Hopf bifurcation. Further, in~\cite{YZ2012}, the fifth equation of
model (\ref{b2}) is modified as $\dot w= \eta + cz-qw $, where $\eta$ is
a control parameter to measure the injection rate of the recombinant,
and then a complete dynamical analysis is given in this article, showing
that increasing $\eta$ is beneficial for controlling/eliminating the
HIV virus~\cite{YZ2012}.

In this paper, to further improve the model ({\ref{b2}),
we introduce a time lag into the model (\ref{b2}),
since in real situation, time is needed for the virus to contact
a target cell and then the contacted cells to become actively affected.
This can be described by the eclipse phase of the virus life cycle.
Moreover, we assume that the probability density that a cell still
remains infected for $\tau$ time units after being contacted
by the virus obeys an exponentially decay function.
Therefore, following the line of \cite{Zhu2008,Zhu2009},
model (\ref{b2}) can be modified to
\begin{equation}\label{b3}
\begin{array}{cll}
\dot x(t)&\!\!\!=\!\!\!& \lambda-dx(t)-\beta x(t)v(t),\\[0.5mm]
\dot y(t)&\!\!\!=\!\!\!& \beta e^{-a\tau} x(t-\tau)v(t-\tau)-ay(t)
-\alpha w(t)y(t),\\[0.5mm]
\dot z(t)&\!\!\!=\!\!\!& \alpha w(t)y(t)-bz(t),\\[0.5mm]
\dot v(t)&\!\!\!=\!\!\!& ky(t)-pv(t),\\[0.5mm]
\dot w(t)&\!\!\!=\!\!\!& cz(t)-qw(t),
\end{array}
\end{equation}
where $\tau$ denotes the average time for a viral particle to go through
the eclipse phase. Because the dimension of the system is higher than two,
model (\ref{b3}) may exhibit some interesting dynamic behaviors (Hopf
bifurcation, limit cycles and even chaos), which would make the analysis of
the system more complicated. Thus, the main goal of this paper focuses on
dynamical behaviour of the system with delay, in particular, on
equilibrium solutions and their bifurcations. More importantly,
we want to find the impact of the delay on the dynamical properties.

The rest of this paper is organized as follows.
In next section, for system (\ref{b3}) we will discuss the well-posedness of
the solutions, equilibria and their stability.
Also, in order to properly define biologically meaningful equilibria,
the basic reproduction number $\mathcal{R}_0$ will be defined.
In Sections 3, 4 and 5, we analyze the stability of the three equilibria:
disease-free equilibrium $E_0$ , single-infection equilibrium $E_s$,
and double-infection equilibrium $E_d$. It will be shown that
$E_0$ is globally asymptotically stable for $0<\mathcal{R}_0<1$,
$E_s$ is globally asymptotically stable for $1<\mathcal{R}_0<R_1$,
where $R_1>1$ is a constant defined in terms of the system parameters,
and $E_d$ is asymptotically stable for $R_1<\mathcal{R}_0<R_h$,
where $R_h$ denotes a Hopf critical point from which a family of
limit cycles bifurcate.
A numerical example is present in Section 6 to demonstrate the
theoretical predictions. Finally, conclusion and discussion are drawn
in Section 7.

\section{Well-posedness, boundedness of solutions, equilibria and basic
reproduction number}

Because of biological reasons, all variables in model (\ref{b3})
must be non-negative. Therefore, for any non-negative initial values,
the corresponding solution must remain non-negative. We have the following
result.

\begin{theorem} \label{result0}
All solutions of system (\ref{b3}) remain non-negative, provided the given
conditions are non-negative, and bounded.
\end{theorem}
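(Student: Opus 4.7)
My plan is to treat non-negativity and boundedness separately. Local existence and uniqueness of a continuous solution on a maximal interval follow from the standard theory of delay differential equations, since the right-hand side of (\ref{b3}) is locally Lipschitz in its arguments; thus it suffices to rule out solutions leaving the non-negative orthant or blowing up in finite time.

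For non-negativity I would proceed by the method of steps on the successive intervals $[0,\tau],[\tau,2\tau],\ldots$, inside each of which the retarded term $\beta e^{-a\tau}x(t-\tau)v(t-\tau)$ is already known to be non-negative from the previous step. Writing each equation in variation-of-constants form, for example
\begin{equation*}
x(t) = x(0)\, e^{-\int_0^t (d+\beta v(s))\,ds} + \lambda\int_0^t e^{-\int_s^t (d+\beta v(u))\,du}\,ds,
\end{equation*}
\begin{equation*}
y(t) = y(0)\, e^{-\int_0^t (a+\alpha w(s))\,ds} + \int_0^t \beta e^{-a\tau} x(s-\tau) v(s-\tau)\, e^{-\int_s^t (a+\alpha w(u))\,du}\,ds,
\end{equation*}
and analogous formulas for $z$, $v$, $w$, one reads off that each component remains non-negative as soon as all others are. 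Induction on the step-intervals closes the argument.

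For boundedness, $\dot x \le \lambda - dx$ yields at once $x(t)\le\max\{x(0),\lambda/d\}$, so $x$ is uniformly bounded. To control $y$ and $z$ the issue is to cancel the delayed infection term against $\beta xv$. The device I would use is the time-shifted auxiliary functional
\begin{equation*}
T(t) := x(t) + e^{a\tau}\bigl[y(t+\tau)+z(t+\tau)\bigr],
\end{equation*}
whose derivative telescopes, by direct substitution into (\ref{b3}), to
\begin{equation*}
\dot T(t) = \lambda - d x(t) - a e^{a\tau} y(t+\tau) - b e^{a\tau} z(t+\tau) \le \lambda - \mu\, T(t),
\end{equation*}
with $\mu := \min\{d,a,b\}>0$. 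A standard comparison argument then gives a uniform bound on $T$, hence on $y$ and $z$ for all $t\ge\tau$; continuity and the boundedness of the prescribed initial history extend the bound to $[0,\tau]$. Finally, $\dot v = ky - pv$ and $\dot w = cz - qw$ are scalar linear equations with bounded non-negative forcing, so $v$ and $w$ are bounded as well.

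The only real obstacle is finding the functional $T$: a naive sum such as $x+y+z$ cannot annihilate the retarded term $\beta e^{-a\tau}x(t-\tau)v(t-\tau)$, and this is where the forward shift by $\tau$ together with the weight $e^{a\tau}$ is essential, since it aligns the $\beta xv$ contribution in $\dot x(t)$ with the one in $e^{a\tau}\dot y(t+\tau)$ so that they cancel exactly. Once this cancellation is in place the remaining terms are linear and dissipative, and the rest of the proof reduces to Gronwall.
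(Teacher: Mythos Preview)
Your argument is correct and close in spirit to the paper's, but the two proofs differ in their packaging.

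For non-negativity, the paper does not unwind variation-of-constants formulas. Instead it writes the system as $\dot{\mathbf x}=\mathbf f(\mathbf x_t)$, observes that whenever $\phi\ge 0$ and $\phi_i(0)=0$ one has $\mathbf f_i(\phi)\ge 0$, and then invokes the standard positivity theorem for functional differential equations (Smith, \emph{Monotone Dynamical Systems}, Theorem~2.1). Your explicit integral representations achieve the same thing more concretely; the only point to be careful about is the mutual coupling between $z$ and $w$ within a single step-interval, which the method of steps alone does not break. That residual circularity is of course handled by the usual minimal-time argument (or by noting that the $(z,w)$ subsystem is cooperative once $y\ge 0$), which is exactly what the abstract theorem encapsulates.

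For boundedness, the paper builds a single five-component functional
\[
B(t)=cke^{-a\tau}x(t)+ck\,y(t+\tau)+ck\,z(t+\tau)+\tfrac{ac}{2}\,v(t+\tau)+\tfrac{bk}{2}\,w(t+\tau),
\]
whose derivative satisfies $\dot B\le cke^{-a\tau}\lambda-mB$ with $m=\min\{d,\tfrac a2,\tfrac b2,p,q\}$. Your route is more modular: you use the smaller shifted functional $T(t)=x(t)+e^{a\tau}[y(t+\tau)+z(t+\tau)]$ to bound $x,y,z$, and then read off bounds on $v$ and $w$ from the two scalar linear equations with bounded forcing. Both proofs hinge on the same device you identified --- shifting $y$ and $z$ forward by $\tau$ so that the retarded infection term in $\dot y$ lines up with $-\beta xv$ in $\dot x$ and cancels --- so the essential idea is shared. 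Your version yields the slightly better constant $\mu=\min\{d,a,b\}$ at the cost of an extra step; the paper's single functional is more compact but forces the halved rates $a/2$, $b/2$ in order to leave room for the $v$ and $w$ terms.
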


\begin{proof}
For convenience, let $X=C([-\tau,0];R^5)$ be the Banach space of
continuous mapping from $[-\tau,0]$ to $R^5$ equipped with the sup-norm.
Let $\mathbf{x}(t)=(x(t),y(t),z(t),\\
v(t),w(t))^T$
and $\mathbf{x}_t(\theta)=\mathbf{x}(t+\theta)$ for $\theta\in[-\tau,0]$.
By the fundamental theory of FDEs (see, e.g. \cite{Hale1993}),
for any initial condition $\phi\in X$ with $\phi\ge0$,
we know that there exists a unique solution $\mathbf x(t,\phi)$
satisfying $\mathbf x(\theta,\phi)=\phi(\theta)$, $\theta\in[-\tau,0]$.

System (\ref{b3}) can be written as $\dot{\mathbf{x}}(t)=\mathbf{f}(\mathbf{x}_t)$,
where
\begin{equation*}
\mathbf f(\mathbf{x}_t)=
\left(\begin{array}{c}
\lambda-dx_t(0)-\beta x_t(0)v_t(0)\\
\beta e^{-a\tau}x_t(-\tau)v_t(-\tau)-ay_t(0)-\alpha w_t(0)y_t(0)\\
\alpha w_t(0)y_t(0)-bz_t(0)\\
ky_t(0)-pv_t(0)\\
cz_t(0)-qw_t(0)
\end{array}\right).
\end{equation*}
It is easy to see that if any $\phi\in X$ satisfies $\phi\ge0$, $\phi_i(0)=0$ for some $i$,
then $\mathbf f_i(\phi)\ge0$.
Therefore, according to
Theorem 2.1 (on page 81) in~\cite{Smith1995} we know that $\mathbf x(t,\phi)\ge0$
for all $t\ge0$ in its maximal interval of existence if $\phi\ge0$.



Next, to show the boundedness of the solution $(x(t),y(t),z(t),v(t),w(t))$,
we define
\begin{equation*}
B(t)= cke^{-a\tau}x(t)+cky(t+\tau)+ckz(t+\tau)
+\frac{ac}{2} v(t+\tau)+\frac{bk}{2} w(t+\tau).
\end{equation*}
Then, the derivative of $B(t)$ with respective to time $t$ along the
solution of trajectory of system (\ref{b3}) is given by
\begin{equation*}
\begin{split}
\frac{dB(t)}{dt}\Big|_{(\ref{b3})}
=&\ cke^{-a\tau}\big[\lambda-dx(t)-\beta v(t)x(t)\big]\\[-1.0ex]
 & +ck\big[\beta e^{-a\tau}v(t)x(t)-ay(t+\tau)-\alpha w(t+\tau)y(t+\tau)\big]\\
  &+ck\big[\alpha w(t+\tau)y(t+\tau)-bz(t+\tau)\big]\\[-0.5ex]
  &+\frac{ac}{2}\big[ky(t+\tau)-pv(t+\tau)\big]
+\frac{bk}{2}\big[cz(t+\tau)-qw(t+\tau)\big]\\[-1.0ex]
 =& \ cke^{-a\tau}\lambda-dcke^{-a\tau}x(t)
-\frac{a}{2}cky(t+\tau)-\frac{b}{2}ckz(t+\tau)\\[-1.0ex]
  & -p\frac{ac}{2}v(t+\tau)-q\frac{bk}{2}w(t+\tau)\\
\leq & \ cke^{-a\tau}\lambda-mB(t) \left\{\!\! \begin{array}{ll}
< 0 & {\rm for} \ \ B(t) > \displaystyle\frac{ck}{m} e^{- a \tau}, \\[1.5ex]
> 0 & {\rm for} \ \ B(t) < \displaystyle\frac{ck}{m} e^{- a \tau},
\end{array}  \right.
\end{split}
\end{equation*}
where $ m = \min\{ d, \frac{a}{2}, \frac{b}{2}, p, q \}$
This implies that $B(t)$ is bounded, so are $x(t)$, $y(t)$, $z(t)$, $v(t)$
and $w(t)$.
\end{proof}

Model (\ref{b3}) has three possible biologically meaningful equilibria:
disease-free equilibrium $E_0$, single-infection equilibrium $E_s$
and double-infection equilibrium $E_d$, given below:
\begin{equation*}
\begin{split}
E_0 =& \Big(\frac{\lambda}{d},\,0,\,0,\,0,\,0\Big),\\
E_s =& \Big(\frac{ap}{\beta ke^{-a\tau}},\
\frac{k\beta\lambda e^{-a\tau}-adp}{\beta ak},\ 0,\
\frac{k\beta\lambda e^{-a\tau}-adp}{\beta ap},\ 0\Big),\\
E_d =& \Big(\frac{\lambda \alpha cp}{d\alpha cp+\beta bkq},\
\frac{bq}{\alpha c},\
\frac{q(\alpha\beta\lambda cke^{-a\tau}-\beta abkq-\alpha acdp)}
{\alpha c(\beta bkq+\alpha cdp)},\\
&\quad \frac{bkq}{\alpha cp}, \
\frac{\alpha\beta\lambda cke^{-a\tau}-\beta abkq-\alpha acdp}
{\alpha (\beta bkq+\alpha cdp)}\Big).
\end{split}
\end{equation*}

We define
\begin{equation*}
\mathcal{R}_0 \triangleq \frac{\lambda}{d}\cdot\frac{\beta e^{-a\tau}}{a}\cdot \frac k  p= \frac{k\beta\lambda}{adp}e^{-a\tau},
\end{equation*}
where $\frac{\lambda}{d}$ is the average number of healthy cells available for infection,
$\frac{\beta e^{-a\tau}}{a}$ is the average number of host cells that each HIV virus infects,
and $\frac k p$ is the average number of HIV viruses that an infected cell produces.
Therefore, $\mathcal R_0$ is the basic reproduction number.

It is seen that the disease-free equilibrium is independent of the
delay. If $\mathcal{R}_0<1$, $E_0$ is the only biologically meaningful equilibrium.
If $\mathcal{R}_0>1$, there is another biologically meaningful equilibrium
$E_s$ (single-infection equilibrium). The double-infection equilibrium
$E_d$ exists (biologically meaningful) if and only if $R_d>1$, where
\begin{equation*}
R_d = \frac{\alpha\beta\lambda cke^{-a\tau}-\alpha acdp}{\beta abkq}
=\frac{\alpha cdp}{\beta bkq}(\mathcal{R}_0-1).
\end{equation*}
Hence,
\begin{equation*}
R_d>1 \Leftrightarrow \mathcal{R}_0>R_1,\ \
\mbox{where}\ R_1=1+\frac{\beta bkq}{\alpha cdp}.
\end{equation*}
Note that $R_1$ is independent of the delay.

\section{Stability of the disease-free equilibrium  $E_0$}
First, for the local stability of $E_0$, we have the following theorem.

\begin{theorem} \label{Thm1}
When $\mathcal{R}_0<1$, the disease-free equilibrium $E_0$ is locally
asymptotically stable; when $\mathcal{R}_0>1$, $E_0$ becomes unstable
and the single-infection equilibrium $E_s$ occurs.
\end{theorem}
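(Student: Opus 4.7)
The plan is a standard characteristic-equation analysis for the DDE at $E_0$, made nearly routine by the block structure of the linearization. I would first linearize (\ref{b3}) at $E_0=(\lambda/d,0,0,0,0)$: because the nonlinearities $\beta xv$, $\alpha wy$, and the delayed product $\beta e^{-a\tau}x(t-\tau)v(t-\tau)$ all involve variables that vanish at $E_0$, the Jacobian is sparse, and the only delayed entry is $\beta e^{-a\tau}(\lambda/d)\,v(t-\tau)$ in the $\dot y$ equation. Computing $\det(\mu I-J_0-J_1 e^{-\mu\tau})$ (I use $\mu$ for the eigenvalue to avoid conflict with the recruitment rate $\lambda$) and exploiting that the $z$- and $w$-rows are block-triangular while the $x$-row decouples from the $(y,v)$ block at $E_0$, the characteristic equation factors as
\[
(\mu+d)(\mu+b)(\mu+q)\,\bigl[(\mu+a)(\mu+p)-ap\,\mathcal{R}_0\,e^{-\mu\tau}\bigr]=0,
\]
where I have used the identity $\beta k\lambda e^{-a\tau}/d=ap\,\mathcal{R}_0$ that follows immediately from the definition of $\mathcal{R}_0$ in Section 2. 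The three roots $\mu=-d,\,-b,\,-q$ are strictly negative, so the stability of $E_0$ is controlled entirely by the transcendental factor $G(\mu):=(\mu+a)(\mu+p)-ap\,\mathcal{R}_0\,e^{-\mu\tau}$.

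For $\mathcal{R}_0<1$ I would argue by contradiction. If $G(\mu)=0$ with $\sigma:=\mathrm{Re}(\mu)\ge 0$, then $|\mu+a|\ge a$ and $|\mu+p|\ge p$, giving $|(\mu+a)(\mu+p)|\ge ap$, whereas $|ap\,\mathcal{R}_0\,e^{-\mu\tau}|=ap\,\mathcal{R}_0\,e^{-\sigma\tau}\le ap\,\mathcal{R}_0<ap$, a contradiction. Hence every root of $G$ has negative real part and $E_0$ is locally asymptotically stable. For $\mathcal{R}_0>1$ I would restrict to real $\mu$: one has $G(0)=ap(1-\mathcal{R}_0)<0$ and $G(\mu)\to+\infty$ as $\mu\to+\infty$, so the intermediate value theorem produces a positive real root and $E_0$ is unstable. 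The existence of the biologically meaningful single-infection equilibrium $E_s$ precisely when $\mathcal{R}_0>1$ has already been recorded in Section 2 (positivity of the $y$ and $v$ components), which closes the ``exchange of stability'' half of the statement.

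The main (and rather mild) obstacle is simply bookkeeping: assembling the $5\times 5$ Jacobian with the delayed block placed correctly and performing the rewriting $\beta k\lambda e^{-a\tau}/d=ap\,\mathcal{R}_0$ that puts the characteristic equation into a form depending on $\mathcal{R}_0$ only through a single scalar. Once $G$ is written that way, the modulus estimate for $\mathcal{R}_0<1$ and the intermediate value argument for $\mathcal{R}_0>1$ are essentially one line each; in particular, no Lyapunov/LaSalle machinery or Hopf threshold analysis is required for this local statement, and the result holds for every $\tau\ge 0$ since the argument never used smallness of the delay.
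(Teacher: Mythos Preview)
Your proof is correct. The factorization of the characteristic equation and the intermediate-value argument for $\mathcal{R}_0>1$ are exactly what the paper does.

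For $\mathcal{R}_0<1$, however, your route differs from the paper's. The paper argues by continuation in $\tau$: at $\tau=0$ the quadratic $\xi^2+(a+p)\xi+ap-\beta\lambda k/d=0$ has only roots with negative real part, the roots depend continuously on $\tau$, and substituting $\xi=i\varpi$ shows that the resulting modulus equation $\varpi^4+(a^2+p^2)\varpi^2+a^2p^2-(ap\mathcal{R}_0)^2=0$ has no positive solution when $\mathcal{R}_0<1$, so no root can reach the imaginary axis. Your direct modulus estimate $|(\mu+a)(\mu+p)|\ge ap>ap\,\mathcal{R}_0\,e^{-\sigma\tau}$ for $\sigma\ge 0$ is more elementary: it handles all $\tau\ge 0$ in one stroke and bypasses the continuity machinery and the Beretta--Kuang boundedness check that the paper invokes. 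The paper's continuation approach, on the other hand, is the template it reuses for $E_s$ and $E_d$, where the characteristic factors are cubic and quintic and no comparably clean modulus bound is available; so its value is methodological consistency rather than efficiency at $E_0$.
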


\begin{proof}
The linearized system of (\ref{b3}) at the disease-free equilibrium $E_0$ is
\begin{equation*}
\begin{array}{cll}
\dot x(t)&\!\!\!=\!\!\!& -dx(t)-\frac{\beta\lambda}{d} v(t),\\[1mm]
\dot y(t)&\!\!\!=\!\!\!& \beta e^{-a\tau}\frac{\lambda}{d} v(t-\tau)-ay(t),
\\[1mm]
\dot z(t)&\!\!\!=\!\!\!& -bz(t),\\[0.5mm]
\dot v(t)&\!\!\!=\!\!\!& ky(t)-pv(t),\\[0mm]
\dot w(t)&\!\!\!=\!\!\!& cz(t)-qw(t),
\end{array}
\end{equation*}
for which the characteristic equation is given by
$$
(\xi+d)(\xi+b)(\xi+q) \Big[\xi^2+(a+p)\xi+ap
-\frac{\beta\lambda k}{d}e^{-(a+\xi)\tau}\Big]=0.
$$

Obviously, for the local stability of $E_0$, it suf{}f{}ices to only
consider the following equation
\begin{equation}\label{b6}
D_0(\xi)=\xi^2+(a+p)\xi+ap-\frac{\beta\lambda k}{d}e^{-(a+\xi)\tau}=0.
\end{equation}
If $\mathcal{R}_0>1$, it is easy to show for real $\xi$ that
$$
D_0(0)=ap(1-\mathcal{R}_0)<0, \quad \lim_{\xi\rightarrow+\infty}
D_0({\xi})=+\infty.
$$
Hence, $D_0(\xi)=0$ has at least one positive real root. Therefore,
if $\mathcal{R}_0>1$, the infection-free equilibrium $E_0$ is unstable.

Next, consider $\mathcal{R}_0<1$. When $\tau=0$, equation (\ref{b6}) becomes
\begin{equation}\label{b7}
\xi^2+(a+p)\xi+ap- \frac{\beta\lambda k}{d}=0.
\end{equation}
In order for the two roots of (\ref{b7}) to have negative real part,
it requires $ap-\beta\lambda k/d>0$, which is equivalent to
$\mathcal{R}_0|_{\tau=0}<1$. Thus, all the roots of (\ref{b7})
have negative real part when $\mathcal{R}_0<1$. From \cite{Busenberg1993},
we know that all the roots of (\ref{b6}) continuously depend on $\tau$.
And the assumption
\begin{equation}\label{b8}
\limsup\left\{ \left| \frac{Q(\xi,\tau)}{P(\xi,\tau)} \right|:
|\xi|\rightarrow\infty,\ {\rm Re}(\xi)\geq0 \right\}<1,
\ \ {\rm for}\ {\rm any} \ \tau,
\end{equation}
could ensure that there are no roots existing in the infinity for equations
in the form $P(\xi,\tau)+Q(\xi,\tau)e^{-\xi\tau}=0$ (see \cite{Beretta2002}).
Obviously, (\ref{b8}) holds here for (\ref{b6}), and hence
${\rm Re}(\xi) < +\infty$ for any root $\xi$ of (\ref{b6})
when $\mathcal{R}_0<1$.
As a result, for $\mathcal{R}_0<1$, the only possibility for the roots
of equation (\ref{b6}) to enter into the right half plane is to cross
the imaginary axis when $\tau$ increases. Thus, we define $\xi=i\varpi$,
$(\varpi>0)$, to be a purely imaginary root of (\ref{b6}).
Then we get
\begin{equation}\label{b9}
-\varpi^2+i(a+p)\varpi+ap- \frac{k\beta\lambda}{d} e^{-(a+i\varpi)\tau}=0.
\end{equation}
Taking moduli of (\ref{b9}) gives
\begin{equation*}
H_0(\varpi^2)=\varpi^4+(a^2+p^2)\varpi^2+a^2p^2
-\Big( \frac{k\beta\lambda}{d} e^{-a\tau} \Big)^2=0.
\end{equation*}
Clearly, $H_0(\varpi^2)$ has no positive real roots if $\mathcal{R}_0<1$.
Therefore, all the roots of (\ref{b6}) have negative real part
if $\mathcal{R}_0<1$.
\end{proof}

Further, for the global stability of $E_0$, we have the following result.

\begin{theorem} \label{Thm2}
If $\mathcal{R}_0<1$, the disease-free equilibrium $E_0$ is globally
asymptotically stable, implying that none of the two virus can invade
regardless of the initial load.
\end{theorem}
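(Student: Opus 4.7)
The plan is to construct a Goh--Volterra type Lyapunov functional and apply the LaSalle invariance principle for delay equations. With $x_0 = \lambda/d$, my candidate is
\begin{equation*}
V(t) = \bigl(x(t) - x_0 - x_0\ln(x(t)/x_0)\bigr) + e^{a\tau} y(t) + \frac{ae^{a\tau}}{k}\,v(t) + \beta\!\int_{t-\tau}^{t}\! x(s)v(s)\,ds + e^{a\tau} z(t) + \frac{be^{a\tau}}{c}\,w(t).
\end{equation*}
Every summand is non-negative on the biologically feasible region (the first is the standard entropy expression, positive with unique minimum at $x = x_0$), so $V \ge 0$ with equality precisely at $E_0$. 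The weights are engineered to force several simultaneous cancellations in $\dot V$ along $(\ref{b3})$: the distributed-delay integral absorbs the mismatch between $\beta e^{-a\tau} x(t-\tau)v(t-\tau)$ and $\beta x(t)v(t)$; the factor $e^{a\tau}$ shared by the $y$- and $z$-summands cancels the two $\alpha w y$ contributions; the weight $be^{a\tau}/c$ on the $w$-summand cancels the residual $b z$ piece coming from $\dot z$; and the coefficient $ae^{a\tau}/k$ on $v$ cancels the $ay$ piece produced by $\dot y$.

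After these cancellations, the algebraic identity $(1 - x_0/x)(\lambda - dx) = -d(x-x_0)^2/x$ and the relation $\beta\lambda/d = (ape^{a\tau}/k)\mathcal{R}_0$ should collapse $\dot V$ to
\begin{equation*}
\dot V = -\,\frac{d(x-x_0)^2}{x} \;+\; \frac{ape^{a\tau}}{k}(\mathcal{R}_0 - 1)\,v \;-\; \frac{bqe^{a\tau}}{c}\,w.
\end{equation*}
When $\mathcal{R}_0 < 1$, all three terms are non-positive, hence $\dot V \le 0$ along the semiflow of $(\ref{b3})$.

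To finish, I would invoke the LaSalle invariance principle for FDEs (as in Hale \cite{Hale1993}), with the boundedness already established in Theorem~\ref{result0} providing the precompact orbits required. On $\{\dot V = 0\}$ one has $x \equiv x_0$, $v \equiv 0$ and $w \equiv 0$; on any invariant subset, $v \equiv 0$ together with $\dot v = ky - pv$ forces $y \equiv 0$, and $w \equiv 0$ together with $\dot w = cz - qw$ forces $z \equiv 0$. Hence the largest invariant subset of $\{\dot V = 0\}$ is $\{E_0\}$, and every trajectory converges to $E_0$. Combined with the local asymptotic stability from Theorem~\ref{Thm1}, this upgrades to global asymptotic stability.

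The principal technical obstacle I expect is the coefficient bookkeeping: the requirement that the two $\alpha w y$ terms, the delayed and instantaneous $\beta x v$ terms, the $ay$ term, and the $b z$ transport from the $z$-equation into the $w$-equation all vanish simultaneously pins down the weights (up to one overall positive scaling), and only this precise balance produces a manifestly non-positive $\dot V$; a secondary but routine concern is verifying that LaSalle's principle is applied correctly in the infinite-dimensional state space $C([-\tau,0];\mathbb{R}^5)$.
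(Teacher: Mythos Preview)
Your proof is correct and follows essentially the same strategy as the paper: construct a Lyapunov functional with a distributed-delay integral term to absorb the time-shifted infection term, choose the remaining weights so that all cross terms cancel, and finish with LaSalle's invariance principle. The only difference is cosmetic---the paper uses a quadratic component $\tfrac{1}{2}e^{-a\tau}(x-\lambda/d)^2$ in place of your Goh--Volterra term $x - x_0 - x_0\ln(x/x_0)$, which produces an extra non-positive factor $-e^{-a\tau}\beta v (x-x_0)^2$ in $\dot V$ but otherwise leads to the same sign-definite expression and the same invariant-set analysis.
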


\begin{proof}
We construct the following Lyapunov function:
\begin{equation*}
\begin{split}
V_0 =& \ \frac{e^{-a\tau}}{2} \Big[ x(t)-\frac{\lambda}{d} \Big]^2
+\frac{\lambda}{d}y(t) +\frac{\lambda}{d}z(t)+\frac{a\lambda}{dk}v(t)
+\frac{b\lambda}{cd}w(t) \\
& +\frac{\lambda\beta}{d} e^{-a\tau}
\int_{t-\tau}^t{x(\eta)v(\eta)}d\eta.
\end{split}
\end{equation*}
Using non-negativity of the solution and $\mathcal{R}_0<1$, the derivative
of $V_0$ with respective to time $t$ along the solution of system (\ref{b3})
can be expressed as
\begin{equation*}
\begin{split}
\frac{dV_0}{dt}\Big|_{(\ref{b3})} =& \ e^{-a\tau} \Big[x(t)
-\frac{\lambda}{d} \Big] \big[\lambda-dx(t)-\beta v(t)x(t)\big]\\
 &+\frac{\lambda}{d}\big[\beta e^{-a\tau}x(t-\tau)v(t-\tau)-ay(t)-bz(t)\big]\\
 &+\frac{a\lambda}{dk}\big[ky(t)-pv(t)\big]
+\frac{b\lambda}{cd}\big[cz(t)-qw(t)\big]\\
 &+\frac{\lambda\beta}{d} e^{-a\tau}\big[x(t)v(t)-x(t-\tau)v(t-\tau)\big]\\
=&-e^{-a\tau}\Big[x(t)-\frac{\lambda}{d}\Big]^2
\big[d+\beta v(t)\big]
-\Big[\frac{a\lambda}{dk}p-\frac{\lambda^2}{d^2}\beta e^{-a\tau}\Big]v(t)
-\frac{b q \lambda }{cd} w(t)\\
=&-e^{-a\tau}\Big[x(t)-\frac{\lambda}{d}\Big]^2
\big[ d+\beta v(t) \big] -\frac{ap\lambda}{dk}(1-\mathcal{R}_0)v(t)
-\frac{b q \lambda}{cd} w(t)\\
\leq& \ 0,
\end{split}
\end{equation*}
and the equality holds for $x= \frac{\lambda}{d}$, $v=w=0$. Thus,
by LaSalle's invariance principle \cite{LaSalle1976}, we conclude
that $E_0$ is globally asymptotically stable.
\end{proof}

\section{Stability of the single-infection equilibrium $E_s$}

From the analysis given in the previous section, we know that
at the critical point $\mathcal{R}_0=1$, the disease-free equilibrium
$E_0$ becomes unstable and bifurcates into the single-infection equilibrium
$E_s$, which exists for $\mathcal{R}_0>1$. Thus, in order to study
the stability of $E_s$, we assume $\mathcal{R}_0>1$ in this section.
Similarly, for the local stability of $E_s$, we
have the following result.

\begin{theorem} \label{Thm3}
If $1<\mathcal{R}_0<R_1$, the single-infection equilibrium $E_s$ is
asymptotically stable; when $\mathcal{R}_0>R_1$, $E_s$ becomes
unstable.
\end{theorem}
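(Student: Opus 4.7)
My plan is to linearize system (\ref{b3}) at $E_s$ and exploit the block-triangular structure of the resulting characteristic equation. Since $z_s=w_s=0$, the $(\tilde z,\tilde w)$-perturbations satisfy an autonomous $2\times 2$ linear subsystem, while the $(\tilde x,\tilde y,\tilde v)$-equations couple to the former only through a single term $-\alpha y_s\tilde w$ in the $\tilde y$-equation. Hence the $5\times 5$ characteristic determinant factorizes as
\[
\bigl[(\xi+b)(\xi+q)-\alpha c y_s\bigr]\cdot D_s(\xi,\tau)=0,
\]
and, using the equilibrium relation $\beta k e^{-a\tau}x_s=ap$, a short calculation reduces the second factor to
\[
D_s(\xi,\tau)=(\xi+d+\beta v_s)(\xi+a)(\xi+p)-ap(\xi+d)\,e^{-\xi\tau}.
\]

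First I would treat the quadratic factor $\xi^2+(b+q)\xi+(bq-\alpha c y_s)$. Its linear coefficient is positive and only the constant term can change sign; substituting $y_s=(k\beta\lambda e^{-a\tau}-adp)/(\beta ak)$ yields $bq-\alpha c y_s>0\Leftrightarrow \mathcal R_0<R_1$. Thus for $\mathcal R_0>R_1$ this factor produces a positive real root and $E_s$ becomes unstable, while for $\mathcal R_0<R_1$ both of its roots have negative real parts, which takes care of the ``becomes unstable'' half of the theorem and one direction of the ``asymptotically stable'' half.

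Next I would show that $D_s(\xi,\tau)=0$ has all roots in the open left half-plane whenever $1<\mathcal R_0<R_1$ and $\tau\ge 0$. At $\tau=0$, $D_s$ expands to $\xi^3+(a+p+d+\beta v_s)\xi^2+(a+p)(d+\beta v_s)\xi+ap\beta v_s$, whose Routh--Hurwitz condition $b_2b_1-b_0>0$ follows from the elementary inequality $(a+p)^2\ge 4ap$ together with $v_s>0$. To pass to $\tau>0$ I would follow the template used for Theorem~\ref{Thm1}: invoke continuity of roots in $\tau$ and the boundedness hypothesis~(\ref{b8}), then rule out purely imaginary roots $\xi=i\omega$ ($\omega>0$) by writing $D_s=P(\xi)+Q(\xi)e^{-\xi\tau}$ with $P(\xi)=(\xi+d+\beta v_s)(\xi+a)(\xi+p)$ and $Q(\xi)=-ap(\xi+d)$, and comparing moduli. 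A direct expansion of $|P(i\omega)|^2-|Q(i\omega)|^2$ in $u=\omega^2$ should yield a cubic polynomial with all coefficients nonnegative and constant term $(ap)^2\beta v_s(2d+\beta v_s)>0$, hence no positive real root and no imaginary crossing.

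The main obstacle is this last moduli computation. One has to verify that the cross-term $a^2p^2 u$ from $|Q|^2$ exactly cancels one summand produced by expanding $|P|^2$, and that the leftover constant $(ap)^2[(d+\beta v_s)^2-d^2]$ is strictly positive; this last inequality is exactly where the assumption $\mathcal R_0>1$ (which gives $v_s>0$) enters the argument. Once this verification is in place, no imaginary crossing can occur for any $\tau\ge 0$, so every root of $D_s$ stays in the open left half-plane throughout $1<\mathcal R_0<R_1$, and combining this with the quadratic-factor analysis yields both conclusions of the theorem.
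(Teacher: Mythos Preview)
Your proposal is correct and follows essentially the same route as the paper: factor the characteristic equation into the $(z,w)$-quadratic and the cubic transcendental factor, settle the quadratic via the sign of $bq-\alpha c y_s$, verify Routh--Hurwitz at $\tau=0$, and exclude purely imaginary roots of the cubic factor by showing $|P(i\omega)|^2-|Q(i\omega)|^2>0$. Your use of the factored form $P(\xi)=(\xi+d+\beta v_s)(\xi+a)(\xi+p)$ makes the moduli computation slightly cleaner than the paper's expanded version (the paper substitutes $d+\beta v_s=d\mathcal{R}_0$ and works with the coefficients $a_i(\tau)$ directly), but the computations and conclusions are identical.
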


\begin{proof}
The linearized system of model (\ref{b3}) at $E_s=(x_s,y_s,0,v_s,0)$ is
\begin{equation*}
\begin{array}{cll}
\dot x(t)&\!\!\!=\!\!\!& -(d+\beta v_s)x(t)-\beta x_sv(t),\\[1mm]
\dot y(t)&\!\!\!=\!\!\!& \beta e^{-a\tau} \big[ v_s x(t-\tau)
+x_sv(t-\tau) \big] -ay(t)-\alpha y_s w(t),\\[1mm]
\dot z(t)&\!\!\!=\!\!\!& \alpha y_sw(t)-bz(t),\\[0.5mm]
\dot v(t)&\!\!\!=\!\!\!& ky(t)-pv(t),\\[0.5mm]
\dot w(t)&\!\!\!=\!\!\!& cz(t)-qw(t),
\end{array}
\end{equation*}
with the corresponding characteristic equation given by $D_1(\xi)D_2(\xi)=0$,
where
\begin{equation*}
\begin{split}
D_1(\xi)=& \ \xi^2+(b+q)\xi+bq-\frac{c\alpha(k\beta\lambda
e^{-a\tau}-adp)}{\beta ak},\\
D_2(\xi)=&\ \xi^3+ \Big(a+p+\frac{k\beta\lambda}{ap}e^{-a\tau} \Big)\xi^2
+\Big[\frac{k\beta\lambda}{ap}e^{-a\tau}(a+p)+ap\Big]\xi\\
&\quad +k\beta\lambda e^{-a\tau}-ap(\xi+d)e^{-\xi\tau}.
\end{split}
\end{equation*}
First, note that $D_1(\xi)$ can be rewritten as
$$
D_1(\xi) = \xi^2+(b+q)\xi+bq(1-R_d),
$$
which indicates that $D_1(\xi)=0$ has two roots with negative real part
if and only if $R_d<1$ (i.e. $\mathcal{R}_0<R_1$), or one positive root
and one negative if $R_d>1$ (i.e. $\mathcal{R}_0>R_1$). Therefore,
if $\mathcal{R}_0>R_1$, the single-infection equilibrium $E_s$ is unstable.

For $D_2(\xi)=0$, we rewrite it as
\begin{equation}\label{b10}
\begin{array}{lll}
\xi^3+a_2(\tau)\xi^2+a_1(\tau)\xi+a_0(\tau)-(c_1\xi+c_2)e^{-\xi\tau}=0,
\end{array}
\end{equation}
where
\begin{equation*}
\begin{split}
a_2(\tau)&= a+p+\frac{k\beta\lambda}{ap}e^{-a\tau}, \quad
a_1(\tau)= \frac{k\beta\lambda}{ap}e^{-a\tau}(a+p)+ap, \\
a_0(\tau)&= k\beta\lambda e^{-a\tau},\quad c_1 = ap, \quad c_2 = apd.
\end{split}
\end{equation*}
It is easy to see that $\xi=0$ is not a root of (\ref{b10})
if $\mathcal{R}_0>1$, since
\begin{equation*}
a_0(\tau)-c_2= k\beta\lambda e^{-a\tau}-apd= apd(\mathcal{R}_0-1)>0.
\end{equation*}
When $\tau=0$, (\ref{b10}) becomes
\begin{equation}\label{b11}
\xi^3+a_2(0)\xi^2+(a_1(0)-c_1)\xi+a_0(0)-c_2=0.
\end{equation}
Applying the Routh-Hurwitz criterion (see \cite{Gantmacher1959}),
we know that all the roots of (\ref{b11}) have negative real part, because
\begin{equation*}
\begin{split}
&a_2(0) = a+p+\frac{k\beta\lambda}{ap}>0,\\
&a_1(0)- c_1 =  \frac{k\beta\lambda}{ap}(a+p)>0,\\
&a_0(0)-c_2 = k\beta\lambda-apd = apd(\mathcal{R}_0|_{\tau=0}-1)>0,
\end{split}
\end{equation*}
and
\begin{equation*}
\begin{split}
a_2(0)(a_1(0) \!-\! c_1) - (a_0(0) \!-\! c_2)
=& \ \Big(a+p+\frac{k\beta\lambda}{ap}\Big)
\frac{k\beta\lambda}{ap}(a+p)-(k\beta\lambda-apd)\\
=& \ \frac{k^2\beta^2\lambda^2}{a^2p^2}(a+p)
+\frac{k\beta\lambda}{ap}(a^2+ap+p^2)+apd>0.
\end{split}
\end{equation*}
Therefore, any root of (\ref{b10}) has negative real part when $\tau=0$.
As discussed in Section 3, we know that all the roots of equation (\ref{b10})
depend continuously on $\tau$. Also, (\ref{b8}) holds for (\ref{b10}),
and hence ${\rm Re}(\xi) < +\infty$ if $D_2(\xi)=0$.
Then, the roots of equation
(\ref{b10}) can only enter into the right half plane by crossing the
imaginary axis when $\tau$ increases. Thus, we define $\xi=i\varpi$
($\varpi>0$) to be a purely imaginary root of (\ref{b10}), and then obtain
\begin{equation*}
-i\varpi^3-a_2(\tau)\varpi^2+ia_1(\tau)\varpi+a_0(\tau)-(ic_1\varpi+c_2)e^{-i\varpi\tau}=0,
\end{equation*}
Taking moduli of the above equation results in
\begin{equation}\label{b12}
\begin{split}
H_s(\varpi^2)= \varpi^6&+\big[a_2^2(\tau)-2a_1(\tau)\big] \varpi^4\\
& +\big[a_1^2(\tau)-2a_0(\tau)a_2(\tau)-c_1^2\big] \varpi^2+a_0^2(\tau)-c_2^2=0.
\end{split}
\end{equation}
Since
\begin{equation*}
\begin{array}{lll}
a_2^2(\tau)-2a_1(\tau)=
a^2+p^2+d^2\mathcal{R}_0^2>0,\\ [1.5mm]
a_1^2(\tau)-2a_0(\tau)a_2(\tau)-c_1^2 = d^2(a^2+p^2)
\mathcal{R}_0^2>0,\\ [1.5mm]
a_0(\tau)^2-c_2^2 =a^2p^2d^2(\mathcal{R}_0^2-1)>0,
\end{array}
\end{equation*}
all the coefficients of $H_s(\varpi^2)$ are positive.
Then the function $H_s(\varpi^2)$ is monotonically increasing
for $0\leq \varpi^2<\infty$ with $H_s(0)>0$. This implies that equation
(\ref{b12}) has no positive roots if $\mathcal{R}_0>1$.
Hence, all the roots of (\ref{b10}) have negative real part
for $\tau>0$ if $\mathcal{R}_0>1$.
\end{proof}

Also, we we can show the global stability of $E_s$, as given in the following
theorem.

\begin{theorem} \label{Thm4}
If $1<\mathcal{R}_0<R_1$, the single-infection equilibrium $E_s$ is
globally asymptotically stable, implying that the recombinant virus
can not survive but the pathogen virus can.
\end{theorem}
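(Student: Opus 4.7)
The plan is to construct a Volterra-type Lyapunov functional and invoke LaSalle's invariance principle. Writing $g(u) := u - 1 - \ln u \ge 0$ (with equality iff $u=1$) and $\hat\beta := \beta e^{-a\tau}$, and using the equilibrium identities $\lambda = d x_s + \beta x_s v_s$, $a y_s = \hat\beta x_s v_s$, $p v_s = k y_s$, I would propose
\begin{equation*}
V_s = e^{-a\tau} x_s\, g\!\left(\tfrac{x}{x_s}\right) + y_s\, g\!\left(\tfrac{y}{y_s}\right) + \tfrac{a}{k} v_s\, g\!\left(\tfrac{v}{v_s}\right) + z + \tfrac{b}{c} w + \hat\beta x_s v_s\!\int_{t-\tau}^{t}\! g\!\left(\tfrac{x(s)v(s)}{x_s v_s}\right) ds.
\end{equation*}
The weights $1$ on $z$ and $b/c$ on $w$ are the unique choice that both (i) cancels the $\alpha w y$ cross-term from $(1-y_s/y)\dot y$ against the $\dot z$ contribution, and (ii) kills the residual $z$ term coming from $\dot z + (b/c)\dot w$.

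Next I would differentiate $V_s$ along (\ref{b3}), rewrite $\ln\frac{x(t-\tau)v(t-\tau)}{x v}$ as $\ln\frac{x_s}{x} + \ln\frac{y_s x(t-\tau) v(t-\tau)}{y x_s v_s} + \ln\frac{y v_s}{y_s v}$, and group each logarithm with its linear partner. The $(x,y,v)$ pieces together with the delay integral should collapse into
\begin{equation*}
-\tfrac{e^{-a\tau} d (x-x_s)^2}{x} - \hat\beta x_s v_s\!\left[g\!\left(\tfrac{x_s}{x}\right) + g\!\left(\tfrac{y_s x(t-\tau) v(t-\tau)}{y x_s v_s}\right) + g\!\left(\tfrac{y v_s}{y_s v}\right)\right] \le 0,
\end{equation*}
while the $z,w$ leftovers reduce, after the cancellations above, to $\tfrac{1}{c}(\alpha c y_s - b q)\,w$. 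Substituting $y_s = dp(\mathcal R_0 - 1)/(\beta k)$ and the definition of $R_1 = 1 + \beta b k q/(\alpha c d p)$, I obtain $\alpha c y_s/(b q) = (\mathcal R_0 - 1)/(R_1 - 1)$, which is strictly less than $1$ exactly on $1 < \mathcal R_0 < R_1$, so $\dot V_s \le 0$ throughout the positive cone.

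Finally I would apply LaSalle's invariance principle. Equality in $\dot V_s \le 0$ forces $x \equiv x_s$, $y v_s \equiv y_s v$, and $w \equiv 0$. On the largest invariant subset, $\dot x \equiv 0$ then yields $v \equiv v_s$, hence $y \equiv y_s$; and $\dot w \equiv 0$ yields $z \equiv 0$. Thus the invariant set collapses to $\{E_s\}$ and $E_s$ is globally asymptotically stable. The hard part will be the algebraic alignment at the $z, w$ step: only the specific weight pair $(1, b/c)$ simultaneously removes both the $wy$ cross-term and the residual $z$ contribution, and it is precisely the sign of the remaining scalar multiple of $w$ that encodes the threshold $\mathcal R_0 < R_1$, so the same $R_1$ coming from the local analysis is forced on us here by the Lyapunov cancellation structure.
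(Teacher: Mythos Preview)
Your proposal is correct and follows essentially the same approach as the paper: the Lyapunov functional you write down coincides (up to additive constants) with the paper's $V_s=V_1+\beta x_sv_se^{-a\tau}V_2$, the derivative reduces to the same three negative $g$-terms plus the scalar multiple $\tfrac{\alpha dp}{\beta k}(\mathcal R_0-R_1)\,w$, and the LaSalle argument is identical. Your use of $g(u)=u-1-\ln u$ is just a more compact packaging of the paper's inequality $n-\sum b_i/a_i+\ln\prod b_i/a_i\le 0$.
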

\begin{proof}
We construct the Lyapunov function $V_s=V_1+\beta x_sv_s e^{-a\tau}V_2$ with
\begin{equation*}
\begin{split}
V_1&=e^{-a\tau}(x-x_s\ln{x})+(y-y_s\ln{y})+z+\frac{a}{k}(v-v_s\ln v)
+\frac{b}{c}w,\\
V_2&=\int_{t-\tau}^{t}\Big(\frac{x(\eta)v(\eta)}{x_sv_s}
-\ln\frac{x(\eta)v(\eta)}{x_sv_s}\Big)d\eta.
\end{split}
\end{equation*}

Substituting $E_s$ into (\ref{b3}) yields three identities
$G_i\equiv0$, $i=1,2,3$, where $G_1 = \lambda-dx_s-\beta x_sv_s$,
$G_2 = \beta e^{-a\tau}x_sv_s-ay_s$, $G_3 = ky_s-pv_s$.
Then we have
\begin{equation*}
\begin{split}
V_{1,x}\dot x&=V_{1,x}\dot x- e^{-a\tau} \Big(1-\frac{x_s}{x}\Big)G_1
-\frac{v}{v_s}G_2-\frac{av}{kv_s}G_3\\
&=d x_s e^{-a\tau} \Big(2-\frac{x_s}{x}-\frac{x}{x_s} \Big)
+\beta x_sv_se^{-a\tau} \Big(1-\frac{x_s}{x}-\frac{xv}{x_sv_s} \Big)
+\frac{ap}{k} v,\\
V_{1,y}\dot y&=V_{1,y}\dot y +G_2\\
& =\beta x_sv_se^{-a\tau}\Big[1+\frac{(y-y_s)x(t-\tau)v(t-\tau)}{yx_sv_s}\Big]
-ay+\alpha(y_s-y)w,\\
V_{1,v}\dot v&=V_{1,v}\dot v + \Big(1-\frac{yv_s}{y_sv} \Big)G_2
+\frac{a}{k}G_3=\beta x_sv_se^{-a\tau} \Big(1-\frac{yv_s}{y_sv}\Big)
+ay-\frac{ap}{k} v,\\
V_{1,z}\dot z&=\alpha yw-bz, \qquad V_{1,w}\dot w=bz-\frac{bq}{c}w,
\end{split}
\end{equation*}
which yields
\begin{equation}\label{b13}
\begin{split}
& \ V_{1,x}\dot x+V_{1,y}\dot y+V_{1,z}\dot z + V_{1,v}\dot v + V_{1,w}\dot w\\
=& \ \beta x_sv_se^{-a\tau} \Big[ 3-\frac{x_s}{x}-\frac{yv_s}{y_sv}
+\frac{(y-y_s)x(t-\tau)v(t-\tau)}{yx_sv_s}-\frac{xv}{x_sv_s} \Big] \\
&+d x_s e^{-a\tau} \Big(2-\frac{x_s}{x}-\frac{x}{x_s} \Big)
+\frac{\alpha dp}{\beta k}(\mathcal{R}_0-R_1)w,
\end{split}
\end{equation}
where $y_s=\frac{dp}{\beta k}(\mathcal{R}_0-1)$ has been used.
And for $V_2$, we have
\begin{equation}\label{b14}
\frac{dV_2}{dt}=\frac{xv}{x_sv_s}-\frac{x(t-\tau)v(t-\tau)}{x_sv_s}+\ln\frac{x(t-\tau)v(t-\tau)}{xv}.
\end{equation}
Combining (\ref{b13}) and (\ref{b14}) yields
\begin{equation*}
\begin{split}
\frac{dV_s}{dt}\Big|_{(3)}&=V_{1,x}\dot x+V_{1,y}\dot y+V_{1,z}\dot z
+ V_{1,v}\dot v + V_{1,w}\dot w+\beta x_sv_s e^{-a\tau}\frac{dV_2}{dt}\\
&=d x_s e^{-a\tau}\Big(2-\frac{x_s}{x}-\frac{x}{x_s} \Big)
+\frac{\alpha dp}{\beta k}(\mathcal{R}_0-R_1)w+\beta x_sv_se^{-a\tau}W,
\end{split}
\end{equation*}
where
$$W=3-\frac{x_s}{x}-\frac{yv_s}{y_sv}-\frac{y_sx(t-\tau)v(t-\tau)}{yx_sv_s}+\ln\frac{x(t-\tau)v(t-\tau)}{xv}\le0,$$
because the following inequality
\begin{equation*}
n-\sum_{i=1}^{n}\frac{b_i}{a_i}+\ln\prod_{i=1}^n\frac{b_i}{a_i}\le0,
\end{equation*}
holds for any positive $a_i$ and $b_i$ (see \cite{Kajiwara2012}). Therefore,
$\frac{dV_s}{dt}|_{(3)}\le0$ when $\mathcal{R}_0<R_1$, and the equality
holds when $x=x_s$, $y=y_s$, $v=v_s$, $w=0$.
Then, by LaSalle's invariance principle \cite{LaSalle1976},
we conclude that $E_s$ is globally asymptotically stable.
\end{proof}

\section{Stability of the double-infection equilibrium $E_d$}
At the critical point $\mathcal{R}_0= R_1$, the single-infection
equilibrium $E_s$ becomes unstable and the double-infection equilibrium
$E_d$ comes into existence for $\mathcal{R}_0> R_1$.
To discuss the stability of $E_d$, we assume $\mathcal{R}_0>R_1$ in this
section. We have the following result for the stability of $E_d$.

\begin{theorem}\label{Thm5}
For model $(\ref{b3})$, there exists an $R_2 > R_1$ such that the
double-infection equilibrium $E_d$ is asymptotically stable
for $R_1<\mathcal{R}_0<R_2$.
\end{theorem}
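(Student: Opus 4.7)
The plan is to establish local asymptotic stability of $E_d$ immediately above the transcritical point $\mathcal{R}_0 = R_1$ and then extend the interval by a continuity argument on the characteristic roots. The pivotal observation is that at $\mathcal{R}_0 = R_1$ one has $R_d = 1$, so $z_d = w_d = 0$ and $E_d$ collapses onto $E_s$; the characteristic equation at $E_d$ therefore coincides with that of $E_s$ and factors as $D_1(\xi)D_2(\xi)$. At $R_d = 1$ we have $D_1(\xi) = \xi(\xi + b + q)$, with roots $0$ and $-(b+q)$, while by the proof of Theorem~\ref{Thm3} the factor $D_2(\xi)$ is Hurwitz for every $\tau \ge 0$ and every $\mathcal{R}_0 > 1$. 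Hence at $\mathcal{R}_0 = R_1$ the spectrum of the linearization at $E_d$ consists of a simple zero eigenvalue together with four eigenvalues in the open left half-plane.

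I would then linearize (\ref{b3}) about $E_d$ for general $\mathcal{R}_0 > R_1$ and write the resulting characteristic equation as a transcendental polynomial $P(\xi,\tau) + Q(\xi,\tau) e^{-\xi\tau} = 0$ of degree five in $\xi$; the clean factorization is lost once $z_d, w_d > 0$, because the term $\alpha w_d y$ in $\dot z$ destroys the triangular coupling between the $(x,y,v)$- and $(z,w)$-blocks that is present at $E_s$. Treating $\mathcal{R}_0$ as the bifurcation parameter and using continuity of the roots, the four eigenvalues already in the open left half-plane at $R_1$ remain there for $\mathcal{R}_0$ sufficiently close to $R_1$; only the simple zero root must be tracked. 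The standard transcritical exchange-of-stability---combined with Theorem~\ref{Thm3}, where this same zero root at $E_s$ is shown to cross into the right half-plane for $\mathcal{R}_0 > R_1$---forces the $E_d$-branch of the zero root to move in the opposite direction into the left half-plane. The required non-degeneracy follows from $D_1'(0) = b + q > 0$ at the bifurcation point, which via implicit differentiation produces a nonzero $d\xi/d\mathcal{R}_0$ at $(\xi,\mathcal{R}_0) = (0, R_1)$. Hence there exists $\delta > 0$ such that every eigenvalue of the linearization at $E_d$ has negative real part for $\mathcal{R}_0 \in (R_1, R_1 + \delta)$.

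To complete the argument, define
\begin{equation*}
R_2 := \sup\bigl\{\, R > R_1 : \text{every root of the characteristic equation at } E_d \text{ lies in } \{\mathrm{Re}\,\xi < 0\} \text{ for every } \mathcal{R}_0 \in (R_1, R)\,\bigr\}.
\end{equation*}
The previous step gives $R_2 > R_1$. Since the roots depend continuously on $\mathcal{R}_0$, and since the bound (\ref{b8})---already verified in Sections 3 and 4 and applying verbatim to the $E_d$-characteristic equation---rules out roots escaping from infinity in the right half-plane, no eigenvalue can enter $\{\mathrm{Re}\,\xi \ge 0\}$ for $\mathcal{R}_0 \in (R_1, R_2)$ without first crossing the imaginary axis, which is precluded by the definition of $R_2$. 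Consequently $E_d$ is asymptotically stable throughout $(R_1, R_2)$.

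The principal obstacle is the transversality of the zero-eigenvalue crossing at $\mathcal{R}_0 = R_1$: although the transcritical picture of exchange of stability between $E_s$ and $E_d$ makes the conclusion intuitively clear, its rigorous verification requires differentiating the five-variable characteristic determinant with respect to $\mathcal{R}_0$ while simultaneously accounting for the $\mathcal{R}_0$-dependence of $x_d, y_d, z_d, v_d, w_d$ and of the coefficients of $P$ and $Q$. Once this sign is pinned down, the remainder is a routine continuity-and-compactness argument that does not require an explicit value of $R_2$.
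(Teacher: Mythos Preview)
Your approach is valid and genuinely different from the paper's, which does not exploit the collapse $E_d \to E_s$ at $\mathcal{R}_0 = R_1$. Instead, the paper computes the characteristic equation at $E_d$ explicitly (equation~(\ref{b16})), substitutes $\xi = i\varpi$, eliminates the trigonometric terms to obtain a degree-five polynomial $H(\varpi^2)$, verifies that all Routh--Hurwitz determinants $\Delta_1,\dots,\Delta_5$ of $H$ are strictly positive at $\mathcal{R}_0 = R_1$ (hence on a right-neighborhood by continuity in $\mathcal{R}_0$), and then anchors the root location by citing the $\tau = 0$ stability result of~\cite{Jiang2009} together with continuity in $\tau$. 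Your route sidesteps both the Hurwitz algebra and the external reference; the paper's route, in exchange, produces explicit inequalities $\Delta_i > 0$ that could in principle delimit $R_2$.

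Two points need tightening. First, $D_2(\xi)$ is transcendental, so the spectrum at $\mathcal{R}_0 = R_1$ consists of $\{0,\,-(b+q)\}$ together with \emph{infinitely many} roots of $D_2$ in the open left half-plane---not ``four eigenvalues.'' The continuity argument survives because only finitely many roots lie in any half-plane $\{\mathrm{Re}\,\xi > -\gamma\}$, but the count should be stated correctly. Second, invoking ``standard transcritical exchange of stability'' for the sign of the crossing is a heuristic; you must compute $d\xi/d\mathcal{R}_0$ at $(0,R_1)$ for the $E_d$ characteristic function directly. This is easier than you suggest: at $\mathcal{R}_0 = R_1$ the determinant factors as $D_1 D_2$, so $\partial_\xi D(0;R_1) = D_1'(0)\,D_2(0) = (b+q)\cdot apd(R_1-1) > 0$, while from (\ref{b16}) the constant term at $E_d$ is $A_0 = abdpq(\mathcal{R}_0 - R_1)$, giving $\partial_{\mathcal{R}_0} D(0;R_1) = abdpq$ and hence $\xi'(R_1) = -bq\big/[(b+q)(R_1-1)] < 0$. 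With this in hand your argument is complete and arguably more transparent than the paper's.
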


\begin{proof}
The linearized system of (\ref{b3}) at
$E_d=(x_d,y_d,z_d,v_d,w_d)$ is
\begin{equation}\label{b15}
\begin{array}{cll}
\dot x(t)&\!\!\!=\!\!\!& -(d+\beta v_d)x(t)-\beta x_dv(t) ,\\[1mm]
\dot y(t)&\!\!\!=\!\!\!& \beta e^{-a\tau}\big[ v_dx(t-\tau)+x_dv(t-\tau) \big]
-(a+\alpha w_d)y(t)-\alpha y_dw(t),\\[1mm]
\dot z(t)&\!\!\!=\!\!\!& \alpha w_dy(t)-bz(t)+\alpha y_dw(t),\\[1mm]
\dot v(t)&\!\!\!=\!\!\!& ky(t)-pv(t),\\[1mm]
\dot w(t)&\!\!\!=\!\!\!& cz(t)-qw(t).
\end{array}
\end{equation}
By straightforward but tedious algebraic manipulations,
we obtain the characteristic equation of (\ref{b15}), given by
\begin{equation}\label{b16}
\begin{split}
D(\xi)=&\ (\xi+p)(\xi+dR_1)\Big[\xi (\xi+b+q)
\Big(\xi+a\frac{\mathcal{R}_0}{R_1} \Big)
+abq \Big(\frac{\mathcal{R}_0}{R_1}-1 \Big)\Big] \\
&   -ap\frac{\mathcal{R}_0}{R_1}\xi(\xi+d)(\xi+b+q) e^{-\xi\tau}\\
=&\ \xi^5+\sum_{i=0}^4A_i\xi^i-\sum_{i=1}^3B_i\xi^ie^{-\xi\tau}=0,
\end{split}
\end{equation}
where
\begin{equation*}
\begin{split}
A_4 &=dR_1+a\frac{\mathcal{R}_0}{R_1}+b+p+q,\\[-1.0ex]
A_3 &=(b+p+q) \Big(dR_1+a\frac{\mathcal{R}_0}{R_1} \Big)
+p(b+q)+ad\mathcal{R}_0,\\[-1.0ex]
A_2 &= ad(b+p+q)\mathcal{R}_0
+p(b+q) \Big( d R_1 + a \frac{\mathcal{R}_0}{R_1} \Big)
+ abq \Big( \frac{\mathcal{R}_0}{R_1} -1 \Big),\\[-1.0ex]
A_1 &= adp (b+q)\mathcal{R}_0
+abq ( p + d R_1) \Big( \frac{\mathcal{R}_0}{R_1} -1 \Big),\\[-1.0ex]
A_0 &=abdpq(\mathcal{R}_0-R_1),\\[-0.5ex]
B_3 &=ap\frac{\mathcal{R}_0}{R_1},\quad
B_2=ap(b+d+q)\frac{\mathcal{R}_0}{R_1},\quad
B_1=apd(b+q)\frac{\mathcal{R}_0}{R_1},
\end{split}
\end{equation*}
showing that all $A_i \ (i=1,2,3,4)$ and $B_j \ (j=1,2,3)$
are positive for $\mathcal{R}_0 > R_1$.

When $\tau=0$, it has been shown in \cite{Jiang2009} that there exists
a constant $R_2^\ast>R_1$ such that $E_d$ is locally asymptotically
stable when $\mathcal{R}_0\in(R_1,R_2^\ast)$, implying that all the
roots of $(\ref{b16})|_{\tau=0}$ have negative real part.

Obviously, $D(\xi)$ satisfies (\ref{b8}), which implies that
$D(\xi)=0$ has no roots in the infinity ${\rm Re}(\xi)=+\infty$.
Following the procedure as shown in Sections 3 and 4,
we let $R(\varpi)$ and $S(\varpi)$ respectively be the real and imaginary
part of $D(i\varpi)\ (\varpi>0)$, given by
\begin{equation*}
\begin{split}
R(\varpi)&=A_4\varpi^4-A_2\varpi^2+A_0+B_2\varpi^2\cos(\varpi\tau)
+(B_3\varpi^2-B_1)\varpi\sin(\varpi\tau),\\
S(\varpi)&=\varpi^5-A_3\varpi^3+A_1\varpi-B_2\varpi^2\sin(\varpi\tau)
+(B_3\varpi^2-B_1)\varpi\cos(\varpi\tau).
\end{split}
\end{equation*}
Solving the equations $R(\varpi)= 0$ and $S(\varpi)= 0$
for $\sin(\varpi\tau)$ and $\cos(\varpi\tau)$, and then substituting
the results into the identity, $\sin^2(\varpi\tau)+\cos^2(\varpi\tau)=1$,
yields
$$
\frac{H(\varpi^2)}{(B_3\varpi^2-B_1)^2\varpi^2+B_2^2\varpi^4}=0
\quad \Longleftrightarrow \quad H(\varpi^2) = 0,
$$
where
$H(\varpi^2) = \varpi^{10}+a_1\varpi^8+a_2\varpi^6+a_3\varpi^4
+a_4\varpi^2+a_5$,
with
\begin{equation}\label{b17}
\begin{split}
a_1 &= A_4^2-2A_3,\\
a_2 &= 2A_1-2A_2A_4+A_3^2-B_3^2,\\
a_3 &= 2A_0A_4-2A_1A_3+A_2^2+2B_1B_3-B_2^2,\\
a_4 &= A_1^2-2A_0A_2-B_1^2,\\
a_5 &= A_0^2.
\end{split}
\end{equation}

In what follows, we shall prove that there exists an $R_2>R_1$ such that
all the roots of $H(x)=0$ have negative real part
when $\mathcal{R}_0\in(R_1,R_2)$, that is, there are no positive real
roots for $H(\varpi^2)=0$. Therefore, for $\mathcal{R}_0\in(R_1,R_2)$,
the roots of (\ref{b16}) stay in the left half complex plane and
$E_d$ is locally asymptotically stable.

The necessary and sufficient conditions for ${\rm Re}(x)<0$ when $H(x)=0$
are given by
\begin{equation*}
\begin{split}
\Delta_1 &= a_1>0,\\
\Delta_2 &= a_1a_2-a_3>0,\\
\Delta_3 &= a_3\Delta_2-a_1(a_1a_4-a_5)>0,\\
\Delta_4 &= a_4\Delta_3-a_5\big[a_2\Delta_2-(a_1a_4-a_5)\big]>0,\\
\Delta_5 &= a_5\Delta_4>0.
\end{split}
\end{equation*}
A straightforward calculation shows that
$$
\Delta_1 =  R_1^2d^2+a^2\frac{R_0^2}{R_1^2}+p^2+(b+q)^2>0,
$$
for any positive parameter values.
Obviously, $\Delta_5=A_0^2\Delta_4>0$ when $\Delta_4>0$.

For $\Delta_2$, $\Delta_3$ and $\Delta_4$, it is not easy to determine
their signs for general $\mathcal{R}_0$. Hence, we
take a continuity argument below. At $\mathcal{R}_0=R_1$,
\begin{equation*}
\begin{split}
\Delta_2|_{\mathcal{R}_0=R_1} = & \ d^4(R_1^2-1)^2F_1
+d^2(F_1^2+2d^2F_1-a^2p^2)(R_1^2-1)\\
&+\big[(b+q)^2+d^2\big](a^2+d^2+p^2)F_1>0,\\
\Delta_3|_{\mathcal{R}_0=R_1} = & \ d^2\big[d^2(a^2+p^2)R_1^4
+(a^4+a^2p^2+p^4)R_1^2+a^2p^2\big]F_2>0,
\end{split}
\end{equation*}
where
\begin{equation*}
\begin{split}
F_1 &= a^2+p^2+(b+q)^2>0,\\
F_2 &= d^2(F_1-a^2)(F_1-p^2)(R_1^2-1)+(b+q)^2\big[d^2+(b+q)^2\big]F_1>0.
\end{split}
\end{equation*}
and $\Delta_4|_{\mathcal{R}_0=R_1}=a^2d^2p^2(b+q)^2(R_1^2-1)
\Delta_3|_{\mathcal{R}_0=R_1}>0$. We know that $\Delta_i$, $i=2,\ 3,\ 4$,
continuously depend on $\mathcal{R}_0$. Hence, there exists
an $R_2\le R_2^\ast$ such that $\Delta_i$, $i=2,\ 3,\ 4$,
are all greater than zero if $R_1<\mathcal{R}_0<R_2$.
\end{proof}

In \cite{Jiang2009}, it is also proved that $E_d$ could lose its stability
through Hopf bifurcation when $\mathcal{R}_0|_{\tau=0}$ is far greater
than $R_1$. So when $\tau>0$, Hopf bifurcation may occur from $E_d$
if $\mathcal{R}_0$ is further increased from $R_1$. To obtain the critical
point at which a Hopf bifurcation takes place, we need solve the equations
$R(\varpi)= 0$ and $ S(\varpi)= 0$ for $\tau$ and $\varpi$, if we take
$\tau$ as our bifurcation parameter. Then, we can determine the
corresponding value(s) of $\mathcal R_0$, and choose the smallest
one $R_h$ satisfying $R_h>R_1$. Denote by $\tau_h$ and $\varpi_h$
the corresponding values of $\tau$ and $\varpi$.

Following \cite{Hassard1981}, there are three additional conditions which need
to be satisfied,
\begin{equation}\label{b18}
R(\varpi)\!=\! 0 \ \Rightarrow \ S(\varpi) \! \ne \! 0
\ \ \big( {\rm or} \ S(\varpi)\!=\! 0 \ \Rightarrow \ R(\varpi)
\! \ne \! 0 \big) \ \ {\rm for}\quad R_1 \!<\! \mathcal R_0 \! < \! R_h,
\end{equation}
\begin{equation}\label{b19}
\left. \frac{\partial D(\xi,\tau)}{\partial \xi}
\right|_{\xi=i\varpi_h,\tau=\tau_h}
\neq0,
\end{equation}
and
\begin{equation}\label{b20}
\left. {\rm Re}\Big(\frac{d \xi}{d \tau }\Big)
\right|_{\xi=i\varpi_h,\tau=\tau_h} < 0.
\end{equation}
The condition (\ref{b18}) implies that there are no solutions
satisfying $R(\varpi)=S(\varpi)=0\,$ if $\,\mathcal R_0 \in
(R_1, R_h)$, for which the characteristic equation $D(\xi)=0$ given
in (\ref{b16}) does not have purely imaginary roots.
From the proof of Theorem~\ref{Thm5}, we know that all roots of
$D(\xi)=0$ have negative real part for $\mathcal R_0 \in (R_1, R_h)$,
which means that the equilibrium $E_d$ is asymptotically stable
if $R_1> \mathcal R_0 < R_h$.
If all the three conditions (\ref{b18}), (\ref{b19}) and (\ref{b20}) hold,
we then conclude that (\ref{b16}) has a pair of purely imaginary roots
and all other roots with negative real part
at $\tau=\tau_h$ (i.e., at $\mathcal{R}_0=R_h$), implying existence of
a Hopf bifurcation. Therefore, at the critical point
$\tau=\tau_h$, $E_d$ loses its stability and bifurcates into
a family of limit cycles.

\section{Numerical Simulation}
In this section, we present a numerical example and some simulations by using dde23 from the software MATLAB R2012a,
to illustrate the theoretical results obtained in previous sections.

\label{table1}
\vspace{0.1in}
\begin{center}
{Table 1. Parameter notations and the sources for their values
\vspace{0.12in}
}
\begin{tabular}{llll}
\hline
 & Definition & Value(day$^{-1}$) & Source\\
\hline
$\lambda$ & Production rate of host cell       &$0\sim 10$ cell/mm$^3$
&\cite{Nelson2000}\\
$d$ & Death rate of host cell                  & 0.01
& \cite{Michie1992}\\
$\beta$ & Infection rate of host cell by virus & 0.004 mm$^3$/vir
& \cite{Revilla2003}\\
$a$ & Death rate of HIV-1 infected cell        & $0.5$
& \cite{Nelson2000}\\
$\alpha$ & Infection rate by recombinant       & Assumed $\,\alpha=\beta$
& \cite{Revilla2003}\\
$b$ &Death rate of double-infected cell        &2
& \cite{Revilla2003}\\
$k$ &HIV-1 production rate by a cell           &50 vir/cell
& \cite{Revilla2003}\\
$p$ &Removal rate of HIV-1                     &3
& \cite{Nelson2000}\\
$c$ &Production rate of recombinant            &2000 vir/cell
& \cite{Revilla2003}\\
&by a double-infected cell&&\\
$q$ &Removal rate of recombinant               &Assumed $\,q=p$
& \cite{Revilla2003}\\
\hline
\end{tabular}
\vspace{0.20in}
\end{center}

\begin{figure}[h]
\vspace{-0.3in}
\begin{center}
\includegraphics[width=1.10\textwidth]{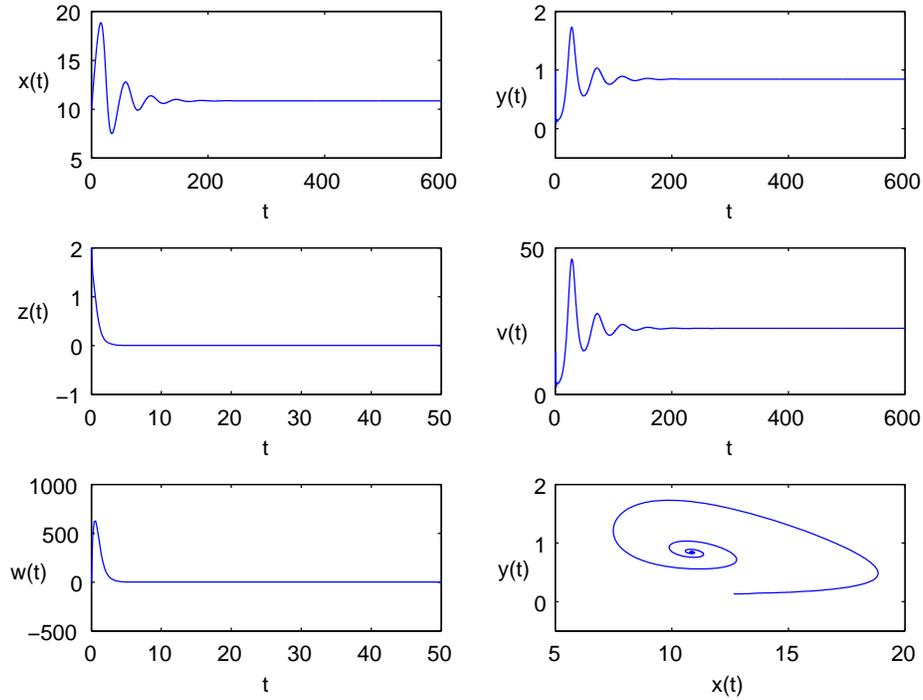}

\vspace{-0.2in}
\caption{Simulation of system (\ref{b3}) for $\tau=1.6 \in (\tau_2, \tau_1)$,
showing convergence to the stable equilibrium $E_s$.}
\label{fig1}
\end{center}
\end{figure}

The notations and typical values of the parameters used in model (\ref{b3})
are given in Table~1.
The precise value of $\tau$ is not obtained.
But it is estimated that the value of $\tau$ is between $1.0 \sim 1.5$ days
\cite{Nelson2000}. Here, we choose $\tau$ as the bifurcation parameter.

For computer simulation, we set $\lambda=1$, $d=1/180$, $\alpha=\beta=1/260$,
$a=0.5$, $b=2$, $p=q=3$, $k=80$, $c=1800$. Then,
$\mathcal R_0=480/13e^{-0.5\tau}$ and $R_1=17$.
The disease-free equilibrium $E_0$ is now given by
$$
E_0 = (180,\ 0,\ 0,\ 0,\ 0),
$$
which is globally asymptotically stable for $\tau>\tau_1=7.2176734929$,
i.e., $\mathcal R_0<1$. When $\tau<\tau_1$, $E_0$ becomes unstable and the
single-infection equilibrium $E_s$ occurs, given by
$$
E_s = (\frac{39}{8}e^{0.5\tau},\ 2e^{-0.5\tau}-\frac{13}{240},\ 0,\
\frac{160}{3}e^{-0.5\tau}-\frac{13}{9},\ 0),
$$
which is globally asymptotically stable for $\tau_1>\tau>\tau_2
=1.5512468048$. See Figure~\ref{fig1} for the simulations of system
(\ref{b3}) when $\tau=1.6$.

Further decreasing $\tau$ to pass through the critical value $\tau_2$
will cause $E_s$ to lose its stability, giving rise to the double-infection
equilibrium,
$$
E_d = (\frac{180}{17},\ \frac{13}{15},\ \frac{8}{17}e^{-0.5\tau}
-\frac{13}{60},\ \frac{208}{9},\ \frac{4800}{17}e^{-0.5\tau}-130).
$$
The corresponding characteristic equation (\ref{b16})
at the above $E_d$ becomes
\begin{equation}\label{f1}
\begin{split}
D(\xi)=& \ \xi^5+ \Big(\frac{240}{221}e^{-0.5\tau}+\frac{1457}{180} \Big)\xi^4
+(\frac{5828}{663}e^{-0.5\tau}+\frac{709}{45})\xi^3\\
&+\Big(\frac{15664}{663}e^{-0.5\tau}-\frac{19}{12}\Big)\xi^2
+\Big(\frac{4796}{221}e^{-0.5\tau}-\frac{557}{60}\Big)\xi\\
&+\frac{24}{13}e^{-0.5\tau}-\frac{17}{20}
-\Big(\frac{720}{221}\xi^3+\frac{212}{13}\xi^2+\frac{20}{221}\xi \Big)
e^{-(\xi+0.5)\tau}=0.
\end{split}
\end{equation}
Let $R(\varpi,\tau)$ and $\varpi S(\varpi,\tau)$ be the real and imaginary
parts of $D(i\varpi) \ (\varpi>0)$, yielding
\begin{equation*}
\begin{split}
R(\varpi,\tau)=& \ \Big(\frac{240}{221}e^{-0.5\tau}+\frac{1457}{180}
\Big)\varpi^4- \Big(\frac{15664}{663}e^{-0.5\tau}-\frac{19}{12}\Big)\varpi^2
+\frac{24}{13}e^{-0.5\tau}-\frac{17}{20}\\
&+ \Big(\frac{720}{221}\varpi^3-\frac{20}{221}\varpi \Big)e^{-0.5\tau}
\sin(\varpi\tau)+\frac{212}{13}\varpi^2e^{-0.5\tau}\cos(\varpi\tau),\\
S(\varpi,\tau)=& \ \varpi^4- \Big(\frac{5828}{663}e^{-0.5\tau}+\frac{709}{45}
\Big)\varpi^2+\frac{4796}{221}e^{-0.5\tau}-\frac{557}{60}\\
&+ \Big(\frac{720}{221}\varpi^2-\frac{20}{221} \Big)e^{-0.5\tau}
\cos(\varpi\tau)-\frac{212}{13}\varpi e^{-0.5\tau}\sin(\varpi\tau),
\end{split}
\end{equation*}

\begin{figure}[b]
\vspace{-0.00in}
\begin{center}
\includegraphics[width=0.7\textwidth,height=0.3\textheight 
]{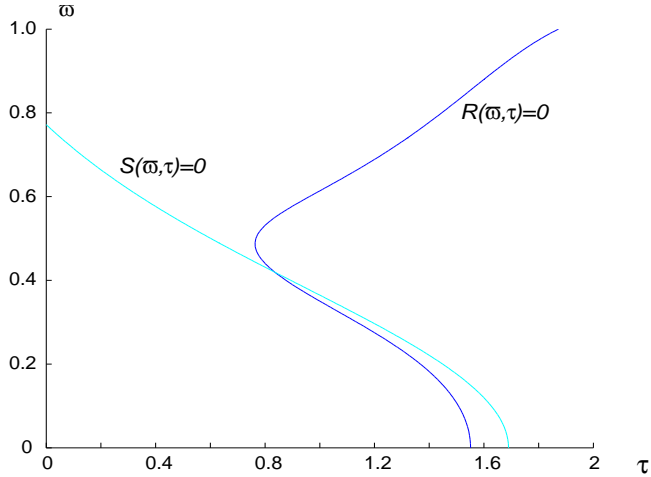}

\vspace{0.0in}
\caption{Plots of the curves $R(\varpi,\tau)=0$ and $S(\varpi,\tau)=0$
in the $\tau$-$\varpi$ plane with $(\varpi,\tau)\in[0,2.1]\times[0,2]$.}
\label{fig2}
\end{center}
\end{figure}

Solving the equations $R(\varpi,\tau)=0$ and $S(\varpi,\tau)=0$ by
using the built-in command ``fsolve'' in Maple results in
$$
(\tau_3,\varpi_3) = (0.8357983104,\ 0.4193565828).
$$
Taking into account
\begin{equation*}
\Big(\frac{720}{221}\varpi^3-\frac{20}{221}\varpi \Big)\sin(\varpi\tau)
+\frac{212}{13} \varpi^2 \cos(\varpi\tau)
\ge -\frac{4}{221}\sqrt{\varpi^2(32400\varpi^2+1)(\varpi^2+25)},
\end{equation*}
we have $R(\varpi,\tau)\ge \widetilde R(\varpi)$, where
\begin{equation*}
\begin{split}
\widetilde R(\varpi)=& \ \frac{4}{221} \Big[60\varpi^4
-\frac{3916}{3}\varpi^2+102- \varpi
\sqrt{(32400\varpi^2+1)(\varpi^2+25)}\Big]e^{-0.5\tau}\\
&+\frac{1457}{180}\varpi^4+\frac{19}{12}\varpi^2-\frac{17}{20}.
\end{split}
\end{equation*}

\begin{figure}[h]
\vspace{-0.30in}
\begin{center}
\includegraphics[width=1.18\textwidth,height=0.55\textheight]{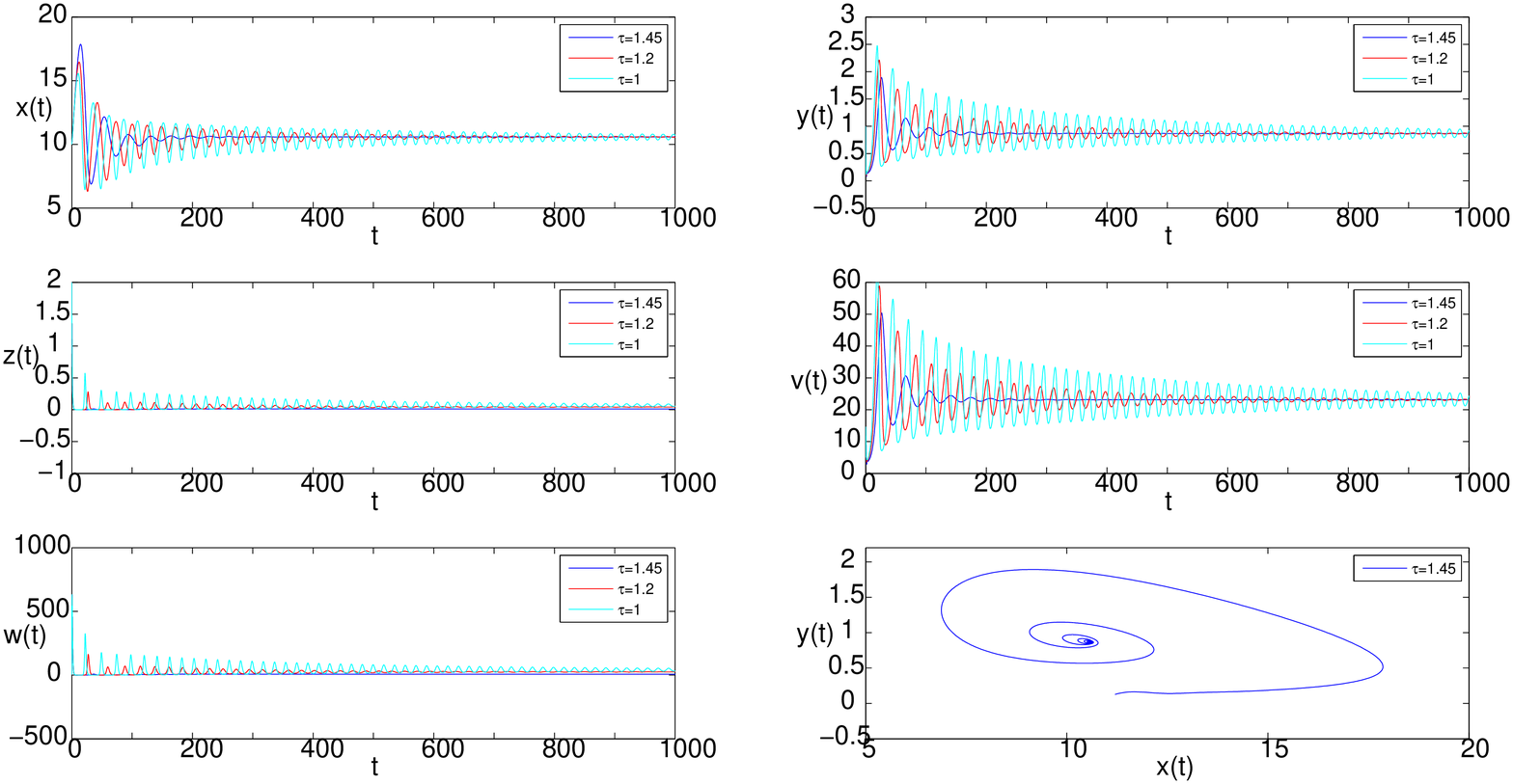}

\vspace{-0.18in}
\caption{Simulation of system (\ref{b3}) for $\tau=1.45,\ 1.2$ and $1.0$,
taken from the interval $\tau \in (\tau_3, \tau_2)$, showing convergence to
the stable equilibrium $E_d$.}
\label{fig3}
\end{center}
\vspace{-0.00in}
\end{figure}

It can be shown that for any $\tau>0$, $\widetilde R(\varpi)>0$
if $\varpi>2.1$. Thus, there are no roots of $R(\varpi,\tau)=0$ for
$\varpi>2.1$, implying that the curve $R(\varpi,\tau)=0$ in Figure~\ref{fig2}
must be below the horizontal line $\varpi=2.1$
(not shown in Figure~\ref{fig2}),
and so $(\tau_3,\varpi_3)$ is the only
intersection point. Given that all the roots of (\ref{f1})
continuously depend on $\tau$, it follows from Theorem~\ref{Thm5}
that $E_d$ is asymptotically stable when $\tau_2>\tau>\tau_3$.
The simulations for $\tau=1.45,\ 1.2$ and $1.0$ are shown
in Figure~\ref{fig3}, from which we observe that all the
components of a solution have more oscillating behaviors with larger
amplitude, and they take longer time to converge to $E_d$ when
$\tau$ is decreased from $\tau_2$ to $\tau_3$.

\begin{figure}[t]
\vspace{-0.2in}
\begin{center}
\includegraphics[width=1.10\textwidth]{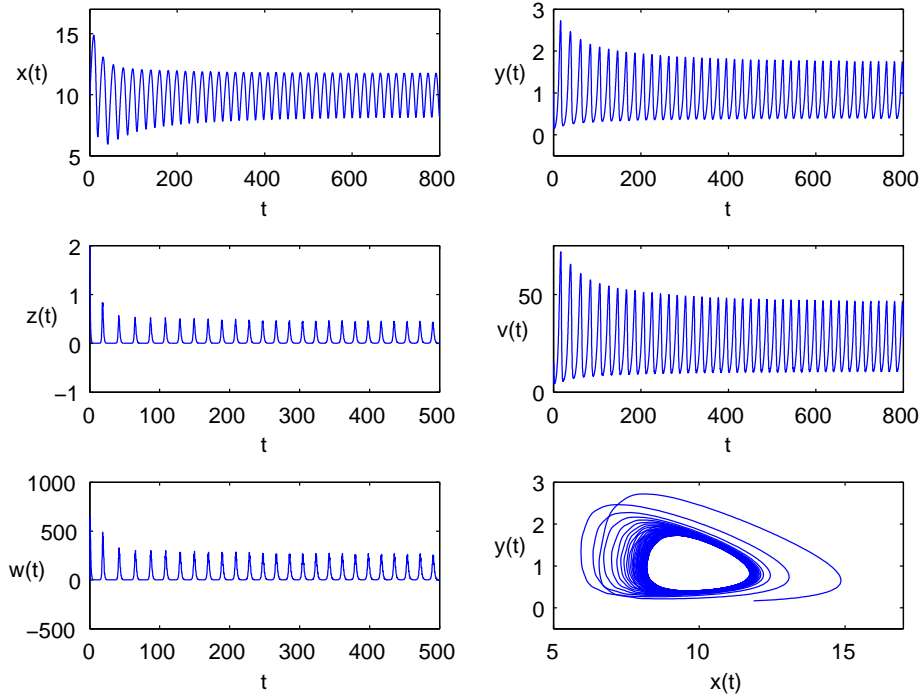}

\vspace{-0.2in}
\caption{Simulation of system (\ref{b3}) for $\tau=0.8<\tau_3$, showing
bifurcation to a stable limit cycle.}
\label{fig4}
\end{center}
\end{figure}

Finally, to consider possible Hopf bifurcation, first it is easy to see
from Figure~\ref{fig2} that
$$
S(\varpi,\tau)= 0 \quad \Longrightarrow \quad R(\varpi,\tau) < 0, \quad
{\rm for} \ \ \tau_2 < \tau < \tau_3,
$$
indicating that condition (\ref{b18}) is satisfied.
Moreover, the other two conditions also hold:
$$
\left. \frac{\partial D(\xi,\tau)}{\partial \xi}
\right|_{\xi=i\varpi_3,\tau=\tau_3}
= -9.8115344435+0.7314225159\,i \neq0,
$$
and
$$
\left. {\rm Re} \Big( \frac{d \xi}{d \tau } \Big)
\right|_{\xi=i\varpi_3,\tau=\tau_3}
= -0.0137073586 < 0.
$$
Thus, the roots of (\ref{f1}) have positive real part when $\tau<\tau_3$,
and (\ref{f1}) has a pair of purely imaginary roots at $\tau=\tau_3$,
implying existence of a Hopf bifurcation. Therefore, we conclude that
when $\tau_2>\tau>\tau_3$, the equilibrium solution $E_d$ is asymptotically
stable. At the critical point, $\tau=\tau_3$, $E_d$ loses its stability
through a Hopf bifurcation, giving rise to limit cycles.
See the simulation shown in Figure \ref{fig4}.
Further, the stability of limit cycles and the direction of bifurcations
can be determined by using the center manifold theory and normal
form theory for delay differential equations (e.g., see~\cite{Yuetla2014}).
Detailed discussions on this part are out of the scope of this paper.

\begin{figure}[h]
\vspace{-0.2in}
\begin{center}
\includegraphics[width=1.10\textwidth]{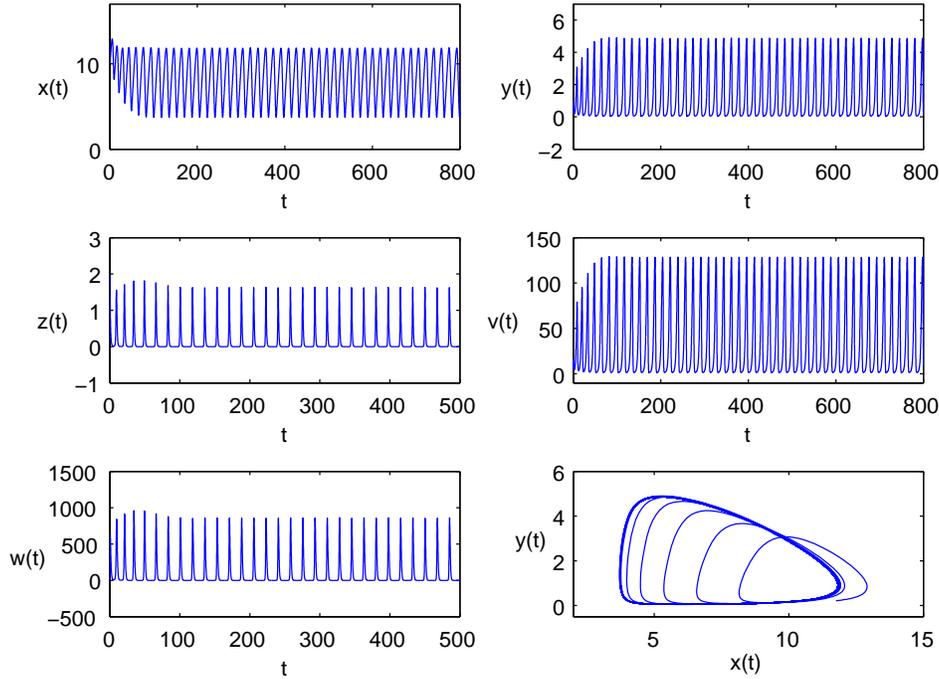}

\vspace{-0.2in}
\caption{Simulation of system (\ref{b3}) for $\tau=0$, showing
oscillating behaviour.}
\label{fig5}
\end{center}
\end{figure}

In order to demonstrate the importance of the delay to be included in
the model, in the following we will compare the results obtained above
to that given at $\tau =0$. It is easy to see that
$\left. \mathcal R_0 \right|_{\tau=0} = \frac{480}{13} > R_1 = 17$, and thus
both the disease-free equilibrium, $E_0$, and the single-infection
equilibrium, $E_1$, are unstable when $\tau=0$. To find the stability of the
double-infection equilibrium, $E_d$, we set $\tau=0$ in (\ref{f1}) to obtain
$$
D(\xi) = \xi^5 + \frac{365197}{39780} \xi^4 + \frac{211709}{9945} \xi^3
         + \frac{15209}{2652} \xi^2 + \frac{163463}{13260} \xi
         + \frac{259}{260},
$$
which yields a purely pair and three negative eigenvalues:
$0.03214833 + 0.76348925\,i$,
$-0.08306245$, $-3.91260798$, and $-5.24904353$, indicating that
$E_d$ is also unstable. Therefore, at $\tau=0$, the system must exhibit
oscillating behaviour, as shown in Figure~\ref{fig5}.
Comparing the results in this figure with that in Figure~\ref{fig4}
shows that at $\tau=0$, the solution trajectory converges much fast
to reach its steady-state value than that in Figure~\ref{fig4} for
$\tau=0.8$ More importantly, it is noted that the amplitudes of the
oscillations in Figure~\ref{fig5} is almost double of that
in Figure~\ref{fig4} though their frequencies are almost not changed.
The above observation shows that lack of even small delay in model
(\ref{b2}) can cause significant quantitative changes in solutions.
Moreover, for normal values of delay, the model
(\ref{b3}) with delay can exhibit qualitatively different behaviour,
compared with the model (\ref{b2}) without delay.
For example, at $\tau=1.2$ days, which is
within the normal range of delays $\tau \in (1.0, 1.5)$
days~\cite{Nelson2000}, model (\ref{b3}) shows convergence to
the stable double-infection equilibrium $E_d$, see Figure~\ref{fig3}.
At the marginal normal value $\tau = 1.6$, model (\ref{b3}) gives
the stable single-infection equilibrium $E_s$, see
Figure~\ref{fig1}. These significant qualitative changes due to
existence of delay can not be observed from the model (\ref{b2}) without
delay involved. This indeed suggests that the delay is a very important
fact which should not be missed in model (\ref{b2}).

\section{Conclusion and discussion}

In this paper, we present a more realistic HIV-1 model of fighting
a virus with another virus by adding delay to the model. The detailed
analytic study has shown that the improved model with delay, like the
model without delay, also has three equilibrium solutions:
the disease-free equilibrium $E_0$, single-infection equilibrium $E_s$,
and double-infection equilibrium $E_d$, and a series of bifurcations occur
as the basic reproduction number, $\mathcal{R}_0$, is increased.
It has shown that $E_0$ is globally asymptotically stable for
$\mathcal{R}_0 \in (0,1)$, and becomes unstable at the
transcritical bifurcation point $\mathcal{R}_0 =1$,
and bifurcates into $E_s$, which is globally asymptotically stable
for $\mathcal{R}_0 \in (1, R_1)$. $E_s$ loses its stability at the
another transcritical bifurcation point $\mathcal{R}_0 =R_1$,
and asymptotically stable for $\mathcal{R}_0 \in (R_1,R_h)$. Finally,
$E_d$ becomes unstable at the Hopf critical point $\mathcal{R}_0 =R_h$,
and bifurcates into a family of limit cycles.

When the delay is chosen
as the bifurcation parameter, it is shown that the delay plays an
important role in determining the dynamic behaviour of the system.
In the normal range of values, the delay may change the dynamic behaviour
quantitatively, such as greatly reducing the amplitudes of oscillations,
or even qualitatively changes the dynamical behaviour such as revoking
oscillating solutions to equilibrium solutions.
This indeed suggests that the delay is a very important
fact which should not be missed in HIV-1 modelling.

In this paper, only Hopf bifurcation has been considered. It is interesting
to know whether the model can exhibit double Hopf bifurcation if, besides
the delay, one more system parameter is chosen as second bifurcation
parameter. Another interesting question arises if we include another fact
of delay to model (\ref{b3}), that is, the existence of
virus production period for new virions to be produced within and released
from the infected cells (see~\cite{Mittler1998}).
When this second delay is included, model (\ref{b3}) becomes
\begin{equation}\label{con-1}
\begin{array}{cll}
\dot{x}(t)&\!\!\!=\!\!\!& \lambda-dx(t)-\beta x(t)v(t),\\[0.5mm]
\dot{y}(t)&\!\!\!=\!\!\!& \beta e^{-a\tau_1}
x(t-\tau_1)v(t-\tau_1)-ay(t)-\alpha w(t)y(t), \\[0.5mm]
\dot{z}(t)&\!\!\!=\!\!\!& \alpha w(t)y(t)-bz(t),\\[0.5mm]
\dot{v}(t)&\!\!\!=\!\!\!& ke^{-\tilde{a}\tau_2}y(t-\tau_2)-pv(t),\\[0.5mm]
\dot{w}(t)&\!\!\!=\!\!\!& cz(t)-qw(t),
\end{array}
\end{equation}
where $\tau_1$ and $\tau_2$ represent the latent period and
virus production period, respectively.
Then for this model, future work includes the study on the dynamical
behaviour and bifurcation patterns of the model, and how the two
delays influence stability and bifurcations. More interestingly,
with these two delays as bifurcation parameters, can the model
exhibit double Hopf bifurcation? Studying these questions
will help to well understand the impact of delays on dynamical
behaviour of HIV-1 model.

\section*{Acknowledgment}
This work was supported by the Natural Science
and Engineering Research Council of Canada (NSERC).



\medskip
Received xxxx 20xx; revised xxxx 20xx.
\medskip

\end{document}